\newtheorem{proposition}{Proposition}[section]
\newtheorem{theorem}{Theorem}[section]
\newtheorem{remark}{Remark}[section]
\newcommand{\teich}{\mathscr{T}}
\newcommand{\hyperbolic}{\mathbb{H}}
\newcommand{\complexes}{\mathbb{C}}
\begin{document}

\title{The horocyclic metrics on Teichm\"{u}ller spaces}
\author{Hideki Miyachi, Ken'ichi Ohshika and Athanase Papadopoulos \medskip}
\address{Hideki Miyachi,
School of Mathematics and Physics,
College of Science and Engineering,
Kanazawa University,
Kakuma-machi, Kanazawa,
Ishikawa, 920-1192, Japan}
\email{miyachi@se.kanazawa-u.ac.jp}  
\address{Ken'ichi Ohshika,
Department of Mathematics,
Gakushuin University,
Mejiro, Toshima-ku, Tokyo, Japan}
  \email{ohshika@math.gakushuin.ac.jp}
\address{Athanase Papadopoulos,
Institut de Recherche Mathématique Avancée,
CNRS et Université de Strasbourg,
7, rue René Descartes, 67084 Strasbourg Cedex, France}
  \email{papadop@math.unistra.fr}
  
\date{\today}
\maketitle

\begin{abstract}
 William Thurston introduced a Finsler metric on the Teichm\"uller spaces of hyperbolic surfaces of finite type which he called the earthquake metric. In this paper, we develop a conformal analogue of this metric. We first define a version of the earthquake metric in the case of the torus and we show that it coincides with the Teichm\"uller metric. Based on this, and using the notion of Teichm\"uller disc, we define a metric on Teichm\"uller spaces of general Riemann surfaces, which we call the horocyclic metric.  Using a complex version of the Legendre transform defined on general Finsler manifolds, we obtain an analogue of Wolpert's duality for the Weil--Petersson symplectic form. This is used to establish a complete analogue in the present conformal setting of the earthquake metric.
 
 This paper will appear in the Annales de l'Institut Fourier.

\medskip

\noindent {\sc Résumé} William Thurston a introduit une métrique de Finsler sur les espaces de Teichmüller des surfaces hyperboliques de type fini, qu’il a appelée la métrique de tremblement de terre. Dans cet article, nous développons un analogue conforme de cette métrique. Nous définissons d’abord une version de la métrique de tremblement de terre dans le cas du tore et montrons qu’elle coïncide avec la métrique de Teichmüller. Partant de là, et en utilisant la notion de disque de Teichmüller, nous définissons une métrique sur les espaces de Teichmüller des surfaces de Riemann générales, que nous appelons la métrique horocyclique.  Utilisant une version complexe de la transformation de Legendre définie sur les variétés de Finsler, nous obtenons un analogue de la dualité de Wolpert pour la forme symplectique de Weil-Petersson. Cela permet d'établir un analogue conforme de la théorie de la distance de tremblement de terre.
 
 Cet article va paraître dans les Annales de l'Institut Fourier.
 
%
%William Thurston a introduit une norme sur l'espace tangent en chaque point de l'espace de Teichm\"uller d'une surface hyperbolique qu'il a appelée norme de tremblement de terre. Cette norme induit une métrique de Finsler sur l'espace de Teichm\"uller, appelée \emph{métrique du tremblement de terre}. Nous étudions un analogue de cette métrique du point de vue conforme. 
%Dans le cas du tore sans point marqué, l'analogue de la métrique du tremblement de terre est naturellement appelée la métrique horocyclique. Dans le cas des surfaces de Riemann de caractéristique d'Euler négative, la collection des métriques horocycliques sur les disques de Teichm\"uller définit une métrique sur l'espace de Teichm\"uller, que nous appelons aussi métrique horocyclique. Nous montrons que cette métrique coïncide avec la métrique de Teichm\"uller.  Une version complexe de la transformée de Legendre définie pour les variétés Finsleriennes donne un analogue de la dualité de Wolpert pour la forme symplectique de Weil--Petersson, ce qui établit un analogue complet de la théorie de Thurston de la norme de tremblement de terre dans le cadre conforme.
% 
\medskip
\noindent AMS classification:  30F60, 32G15, 30F10.

\medskip
\noindent Keywords: 
 Teichm\"uller space, quadratic differential,   horocyclic metric, earthquake flow, Fenchel--Nielsen twist, horocyclic flow, horocyclic norm,
 Teichm\"{u}ller norm, Teichm\"uller metric, Weil--Petersson metric, Wolpert duality, Teichm\"uller disc, extremal length, Legendre transform.

\end{abstract}
\section{Introduction}

William Thurston introduced a new family of deformations of hyperbolic surfaces which generalise the Fenchel--Nielsen deformations. He called these deformations (left)\footnote{In this paper, earthquakes will always be left earthquakes.} earthquakes and proved that any two points in Teichm\"{u}ller space can be joined by an earthquake.
In the paper \cite{KeA}, Kerckhoff obtained an infinitesimal version of this result, namely, that any tangent vector to Teichm\"{u}ller space can be expressed as an infinitesimal earthquake deformation.
Thurston mentioned in \cite{ThM} that this expression induces a norm on the tangent spaces of Teichm\"{u}ller space, which he called the \emph{earthquake norm}, hence also a Finsler metric on that space.
This metric was studied by Huang--Ohshika--Pan--Papadopoulos in \cite{HOPP} and it was called there the earthquake metric. 
We note, among the properties proved in that paper, a duality result, which was also hinted by Thurston in his paper \cite{ThM}, namely, that the tangent space at a point of Teichm\"{u}ller space equipped with the earthquake norm  is linearly isometric to the cotangent space at the same point equipped with Thurston's conorm, that is, the norm dual to the one associated with Thurston's asymmetric metric, via the Weil--Petersson symplectic form.

We first consider the case of the torus, that is, the closed surface of genus one. To define the Teichm\"{u}ller space of this surface, we consider Euclidean structures instead of hyperbolic structures.
What corresponds to the earthquake deformation in this case is naturally called the horocyclic deformation. Indeed, in this Euclidean setting, the Teichm\"uller space of the surface is the hyperbolic plane, and the earthquake paths are the horocycles of this plane. 
In the same way as the earthquake metric was defined in the hyperbolic case, we introduce a Finsler metric on the Teichm\"{u}ller space of the torus, using these infinitesimal horocyclic deformations. We call this Finsler metric the horocyclic metric. In the first part of this paper, we prove that the horocyclic metric is isometric (up to the constant $2$) to  the natural hyperbolic metric on the Teichm\"{u}ller space of tori.
We also show that the Weil--Petersson symplectic form on the Teichm\"{u}ller space of tori establishes a duality between the cotangent space to this space equipped with the conorm associated with the Teichm\"{u}ller metric and the tangent space equipped with the horocyclic norm.
Since both the Teichm\"{u}ller metric and the horocyclic metric are isometric to $2$ $\times$ the hyperbolic metric, this duality looks like a \lq self-duality'. 

We pass then to the Teichm\"uller spaces of arbitrary surfaces of  finite type, that is, closed surfaces of finite genus with a finite number of marked points. Making use of Teichm\"{u}ller discs, we define horocyclic deformations\footnote{In this paper, notions such as \emph{horocyclic deformation} and \emph{horocyclic vector field}, inspired by Thurston's paper \cite{ThM}, are different from other notions of \emph{horocyclic flow}  used in the literature, e.g. the classical notion of horocycle flow in \cite{Ratner}, and the one introduced by Masur on the moduli space of quadratic differentials on a surface \cite{Masur-Ergodic}, although all these notions are related.} of these surfaces and horocyclic metrics for these general Teichm\"{u}ller spaces. Such a metric is also defined via a family of norms on tangent spaces. 
We show that these norms induce  the familiar Teichm\"{u}ller metric, that is, the metric that coincides with half of the hyperbolic metric on each Teichm\"{u}ller disc \cite{Roy}. Thus, in the case of surfaces of arbitrary finite type, the Teichm\"uller and the horocyclic metrics coincide.

In Finsler geometry, there is a notion of Legendre transform, which is a map from the cotangent vector space to the tangent vector space at the same point of a manifold with a Finsler structure. This notion was introduced and studied by Ohta--Sturm \cite{Ohta}.
Applying a complex version  to Teichm\"{u}ller space equipped with the Teichm\"{u}ller metric, it turns out that the Legendre transform coincides with the infinitesimal Teichm\"{u}ller homeomorphism.
This implies in turn a duality between the infinitesimal length function and the infinitesimal horocyclic deformation.

Table 1 summarizes part of the analogy between Thurston's theory of the earthquake metric and its conformal analogue.

\begin{table}[h!]
\begin{tabular}{|c|c|}
\hline
Hyperbolic case & Conformal case \\
\hline\hline
Thurston asymmetric metric & Teichm\"uller metric \\
\hline
\multirow{2}{*}{Earthquake metric} & 
Horocyclic metric  $=$ \\
& $2 \times$ Teichm\"uller metric \\
\hline
Hyperbolic length & Square root of extremal length  \\
\hline
Infinitesimal earthquake deformation 
& Infinitesimal horocyclic deformation \\
\hline
\end{tabular}

\bigskip
\caption{\smaller In each case, a duality between the gradient vectors of the hyperbolic length (resp. extremal length) functions and the infinitesimal earthquake deformations (resp. horocyclic deformations) holds. However, in the hyperbolic case, the duality is given by a linear isometry, but in the conformal case,  the duality is neither linear nor isometric.}
\end{table}

\noindent \emph{Acknowledgements.}
This work was done during the authors' stay in the Centre International de Rencontres Math\'{e}mathique at Luminy, Marseille, for a project of research in residence.
The authors express their sincere gratitude to CIRM for its hospitality and support, as well as to the anonymous referee of this article whose comments helped clarifying several points. 

\section{Preliminaries}
\subsection{Teichm\"uller space}
\label{subsec:teichmuller_space}
Let $\Sigma_{g,m}$ be an oriented closed surface of genus $g\geq 0$ with $m\geq 0$ marked points. 
A \emph{marked Riemann surface} of type $(g,m)$ is a pair $(M,f)$ consisting of a Riemann surface $M$ of genus $g$ with $m$ distinguished points and an orientation-preserving homeomorphism $f\colon \Sigma_{g,m}\to M$ which sends the marked points of $\Sigma_{g,m}$ to the distinguished points of $M$.
Two marked Riemann surfaces $(M_1,f_1)$ and $(M_2,f_2)$ are said to be \emph{Teichm\"uller equivalent} if there is a biholomorphic map $h\colon M_1\to M_2$ sending the distinguished points to themselves such that $h\circ f_1$ is homotopic to $f_2$ fixing the marked points.
The \emph{Teichm\"uller space $\teich_{g,m}$ of type $(g,m)$} is the set of Teichm\"uller equivalence classes of  marked Riemann surfaces. 
In the case when $m=0$, we denote $\teich_{g,0}$ by $\teich_g$. 

The \emph{Teichm\"uller distance} $d_T$ on $\teich_{g,m}$ is defined by
$$
d_T((M_1,f_1),(M_2,f_2))=\frac{1}{2}\log \inf_hK(h)
$$
where $h$ ranges over the quasiconformal maps $h\colon M_1\to M_2$ which preserve the sets of distinguished points and which are homotopic to $f_2\circ f_1$ fixing the distinguished points, and where $K(h)$ denotes the maximal dilatation of $h$.

\subsection{Measured foliations}
\label{subsec:MF}
We think of $\Sigma_{g,m}$ as a pair of an oriented closed surface $\Sigma_g$ of genus $g$ together with a set of marked points $D\subset \Sigma_g$.
A \emph{measured foliation} $\lambda$ on $\Sigma_{g,m}$ with singularities of order $k_1$, $\cdots$, $k_n$ at $x_1$, $\cdots$, $x_n\in \Sigma_g$ ($k_i\ge -1$ if $x_i\in D$, and $k_i\ge 0$ otherwise) is given by a locally finite open cover $\{U_i\}_{i\in I}$ of $M\setminus (\{x_1,\cdots,x_n\}\cup D)$ and a non-vanishing $C^\infty$ real-valued closed $1$-form $\varphi_i$ on each $U_i$ such that
\begin{itemize}
\item[(a)] $\varphi_i=\pm \varphi_j$ on $U_i\cap U_j$; and
\item[(b)] at each $x_i$, assumed to be contained in $U_j$, there is a local chart $(u,v)\colon V\to \mathbb{R}^2$ such that for $z=u+iv$, $\varphi={\rm Im}(z^{k_i/2}dz)$ on $V\cap U_j$ for some branch of $z^{k_i/2}$ in $V \cap U_j$.
\end{itemize}
The set of such pairs $\{(U_i,\varphi_i)\}_{i\in I}$
 is called an atlas for $\lambda$. For $t>0$, we denote by $t\lambda$ the measured foliation with atlas $\{(U_i,t\varphi_i)\}_{i\in I}$. 
We note that  by the condition (a), we have  $|\varphi_i|=|\varphi_j|$ on $U_i\cap U_j$.
For a smooth path $\gamma$ on $M$, we define
$$
\lambda(\gamma)=\sum_{i\in I}\int_{U_i\cap\gamma}e_i|\varphi_i|,
$$
where $\{e_i\}_i$ is a partition of unity subordinate to $\{U_i\}_{i\in I}$.

Let $\mathcal{S}$ be the set of homotopy classes of simple closed curves on $\Sigma_{g,m}$ which are non-contractible and non-peripheral (that is, non-homotopic to a marked point).
For $\alpha\in \mathcal{S}$, we define the \emph{intersection number} $i(\alpha,\lambda)$ by
$$
i(\alpha,\lambda)=\inf_{c\in \alpha}\lambda(c).
$$
Two measured foliations $\lambda_1$ and $\lambda_2$ are equivalent if $i(\alpha,\lambda_1)=i(\alpha,\lambda_2)$ for all $\alpha\in \mathcal{S}$. To simplify, an  equivalence class is also called a \emph{measured foliation} on $\Sigma_{g,m}$, and the set of (equivalence classes of) measured foliations is denoted by $\mathcal{MF}$. 
For  simplicity, we also denote by $\lambda$ the equivalence class of $\lambda$.  

There is a geometric description of  measured foliations, which we recall now.

By the condition (a) in the definition of measured foliation, the foliation $F_\lambda$ whose leaves are integral curves of  the kernels of all $\varphi_i$ is well defined. 
The foliation $F_\lambda$ is thought of as a singular foliation on the surface, with singularities at $x_i$, $\cdots$, $x_n$. 
By the condition (b) in the definition of measured foliation, at each $x_i$, $F_\lambda$ has a $k_i+2$-prong singularity, that is, there are $k_i+2$ leaves emanating from $x_i$.
Furthermore, $|\varphi_i|$ induces a transverse measure on $F_\lambda$.
%
%The \emph{saddle connection} of $\lambda$ is a leaf of $F_{\lambda}$ connecting critical points. 
%For measured foliations $\lambda_1$ and $\lambda_2$, $\lambda_1$ is related to $\lambda_2$ by a \emph{Whitehead move}, denote by $\lambda_1\to \lambda_2$,  if there is a saddle connection of $\lambda_1$ such that $F_{\lambda_2}$ is isotopic to $F_{\lambda_1}$ after collapsing the saddle connection. 
%It is known that $\lambda_1$ and $\lambda_2$ are equivalent if and only if there is a sequence of measured foliations $\lambda_1=\mu_1$, $\mu_2$, $\cdots$, $\mu_k=\lambda_2$ such that for each $i=1$, $\cdots$, $n-1$, either $\lambda_i\to \lambda_{i+1}$ or $\lambda_{i+1}\to \lambda_i$.

%Let $\mathcal{MF}$ be the set of measured foliations.
There is a topology on the space $\mathcal{MF}$, defined in such a way that a sequence $(\lambda_n)_{n=1}^\infty$ in $\mathcal{MF}$ converges to $\lambda\in \mathcal{MF}$ if and only if for any $\alpha\in \mathcal{S}$, the sequence $(i(\alpha,\lambda_n))_{n=1}^\infty$ tends to $i(\alpha,\lambda)$ as $n\to\infty$.
There is a continuous action of the multiplicative group $\mathbb{R}_{>0}$ on $\mathcal{MF}$, defined by
$$
\mathbb{R}_{>0}\times \mathcal{MF}\ni (t,\lambda)\to t\lambda\in \mathcal{MF}.
$$
Thurston showed that $\mathcal{MF}$ is homeomorphic to $\mathbb{R}^{6g-6+2m}\setminus \{0\}$,
and that the quotient space by the $\mathbb{R}_{>0}$-action is homeomorphic to $\mathbb{S}^{6g-7+2m}$.

%Let $\mathcal{S}$ be the homotopy classes of non-contractible and non-peripheral simple closed curves on $\Sigma_{g,m}$.
%We define  $\mathcal{WS}=\{t\alpha\mid t>0, \alpha\in \mathcal{S}$.
%We identify $\mathcal{S}$ as a subset of $\mathcal{WS}$ by setting $1\cdot \alpha=\alpha$. 
%The \emph{intersection number} on $\mathcal{WS}$ is defined by
%$$
%i(t\alpha,s\beta)=ts\,i(\alpha,\beta)
%$$
%where $i(\alpha,\beta)$ on  the right-hand side is the geometric intersection number which is defined by the minimum number of the intersection points between the representatives of $\alpha$ and $\beta$.
%The closure $\mathcal{MF}$ of the embedding
%$$
%\mathcal{WS}\ni t\alpha\mapsto [\beta\mapsto i(t\alpha,\beta)]\in \mathbb{R}_{\ge 0}^{\mathcal{S}}
%$$
%is called the space of \emph{measured foliations} on $\Sigma_{g,m}$, where we define a topology on the function space $\mathbb{R}_{\ge 0}^{\mathcal{S}}$ induced from  the pointwise convergence.

\subsection{Infinitesimal structures}
Let $x=(M,f)$ be a point in $\teich_{g,m}$.
Let $L^\infty(M)$ be the complex Banach space of bounded measurable $(-1,1)$-forms $\mu=\mu(z)dz^{-1}d\overline{z}$ equipped with the norm
$$
\|\mu\|_\infty={\rm ess.sup}_{z\in M}|\mu(z)|.
$$
Let $\mathcal{Q}_x$ be the complex Banach space of integrable holomorphic quadratic differentials $q=q(z)dz^2$ on $M \setminus f(D)$ equipped with the norm
$$
\|q\|=\iint_M|q(z)|dxdy\quad (z=x+iy).
$$
There is a natural pairing between $L^\infty(M)$ and $\mathcal{Q}_x$ defined, for $\mu=\mu(z)dz^{-1}d\overline{z}\in L^\infty(M)$ and $q=q(z)dz^2\in \mathcal{Q}_x$, by
$$
\langle\!\langle \mu,q\rangle\!\rangle=\iint_{M}\mu(z)q(z)dxdy\quad (z=x+iy).
$$
Teichm\"uller showed that the (holomorphic) tangent space $T_x\teich_{g,m}$ is identified with the quotient space
$$
L^\infty(M)/\{\mu\in L^\infty(M)\mid 
\mbox{$\langle\!\langle \mu,q\rangle\!\rangle=0$
for all $q\in \mathcal{Q}_x$}\}.
$$
We denote by $[\mu]\in T_x\teich_{g,m}$ the equivalence class of $\mu\in L^\infty(M)$.
The above pairing descends to the non-degenerate pairing
$$
T_x\teich_{g,m}\times \mathcal{Q}_x\ni (v=[\mu],q)\mapsto \langle v,q\rangle=\langle\!\langle \mu,q\rangle\!\rangle.
$$
Teichm\"uller's theorem tells us that for $x\in \teich_{g,m}$ and for any $v\in T_x\teich_{g,m}$, there are unique $q\in \mathcal{Q}_x$ and $k\ge 0$ such that
$v=[k\overline{q}/|q|]$.

By virtue of the pairing between $T_x\teich_{g,m}$ and $\mathcal{Q}_x$, the space $\mathcal{Q}_x$ is naturally identified with the holomorphic cotangent space $T^*_x\!\teich_{g,m}$ at $x\in \teich_{g,m}$.
Let $\mathcal{Q}_{g,m}=\cup_{x\in \teich_{g,m}}\mathcal{Q}_x$ be the holomorphic vector bundle of holomorphic quadratic differentials over $\teich_{g,m}$. Then,  the above discussion implies that $\mathcal{Q}_{g,m}$ is naturally regarded as the holomophic cotangent bundle over $\teich_{g,m}$.

\subsection{The Teichm\"uller metric}
For $x\in \teich_{g,m}$ and $v\in T_x\teich_{g,m}$, we define the \emph{Teichm\"uller norm} of $v$ by
$$
\kappa(v)=\sup\{{\rm Re}\langle v,q\rangle\mid \mbox{$q\in \mathcal{Q}_x$, $\|q\|=1$}\}.
$$
The Teichm\"{u}ller norm induces a Finsler metric on $\teich_{g,m}$, which is called the Teichm\"{u}ller metric.
Royden \cite{Roy} showed that the Teichm\"uller metric coincides with the Kobayashi metric on $\teich_{g,m}$.
We can easily see that 
\begin{equation}
\label{eq:teichmuller-beltrami}
\kappa\left(\left[\frac{\overline{q}}{|q|}\right]\right)=1
\end{equation}
for any non-zero $q\in \mathcal{Q}_{g,m}$.

\section{The case of the torus}
In this section, we study the horocyclic deformations in the Teichm\"{u}ller space of tori, $\teich_1$.
\subsection{The Teichm\"uller space of tori}
\label{subsec:teichmuller_space_of_torI}
Fix a generator pair $\{A,B\}$ of the first homology group $H_1(\Sigma_1,\mathbb{Z})$ of $\Sigma_1$ such that the algebraic intersection number $A\cdot B$ is $+1$. For $x=(M,f)\in \teich_1$, we take  a holomorphic 1-form $\omega_x$ on $M$. Then, the following map
\begin{equation}
\label{eq:period}
\teich_1\ni x\mapsto \tau(x)=\left.\int_B\omega_x\right/\int_A\omega_x\in \mathbb{H}=\{\tau\in \mathbb{C}\mid
{\rm Im}(\tau)>0\}
\end{equation}
is well defined and is called the period map. Furthermore, it is known that the Teichm\"uller distance on $\teich_1$   coincides with the Poincaré distance $d_{\mathbb{H}}$ of curvature $-4$  on $\mathbb{H}$ via the map
$$
d_T(x_1,x_2)=d_{\mathbb{H}}(\tau(x_1),\tau(x_2)).
$$
The identification between $\teich_1$ and $\mathbb{H}$ via the period map \eqref{eq:period} defines a complex structure on $\teich_1$. 
On the other hand, $\teich_1$ admits a natural complex structure inherited from the complex Banach space of complex coefficients of quasiconformal maps on a (fixed) complex torus. 
These two coincide.

In what follows, we identify the Teichm\"uller space $\teich_1$ with the upper half plane $\mathbb{H}$.

\subsection{Fenchel--Nielsen coordinates on $\teich_1$}
For the contents of this subsection and the next one, we refer the reader to Imayoshi--Taniguchi \cite[\S7.3.5]{IT}. 

Take $x=(M,f)\in \teich_1$, and  denote $\tau(x)$ by $\tau$. Then, the complex torus $M$ is biholomorphic to $M_\tau=\mathbb{C}/\mathbb{Z}\oplus \tau\mathbb{Z}$ via the Abel--Jacobi map (after identifying the complex torus $M_\tau$ with the Jacobian variety of $M$) and the marking $f\colon \Sigma_1\to M$ corresponds to an orientation-preserving homeomorphism $f_\tau\colon \Sigma_1\to M_\tau$  which sends the generators $A$ and $B$ of $H_1(\Sigma_1,\mathbb{Z})$ to the generators $1$ and $\tau$ of the lattice $\mathbb{Z}\oplus \tau\mathbb{Z}$. 

Let $\tau$ be a point in $\mathbb{H}\cong \teich_1$. We define the Fenchel--Nielsen coordinates $(\ell,\theta)$ on $\teich_1$ as follows. Fix a simple closed curve $\alpha$ on $\Sigma_1$ whose homology class is equal to $A$. 
Consider $ds_{\tau}=|dz|/\sqrt{{\rm Im}(\tau)}=\rho_\tau |dz|$,  the flat metric on $M_\tau$ of unit area, where the $z$-coordinate  is that of the universal covering space $\mathbb{C}$ of $M_\tau=\mathbb{C}/\mathbb{Z}\oplus \tau\mathbb{Z}$. The length $\ell(\tau)$ of the geodesic representative of $\alpha$ on $(M_\tau,ds_{\tau})$ is equal to $1/\sqrt{{\rm Im}(\tau)}$. The function $t(\tau)=-{\rm Re}(\tau)/\sqrt{{\rm Im}(\tau)}$ represents the twist parameter of unit speed. Set $\theta(\tau)=2\pi t(\tau)/\ell(\tau)$. Then, the \emph{Fenchel--Nielsen coordinates} of $\teich_1=\mathbb{H}$ with respect to the curve $\alpha$ are given by
\begin{equation}
\label{eq:FN}
\teich_1=\mathbb{H}\ni \tau\mapsto (\ell(\tau),\theta(\tau))
=\left(\frac{1}{\sqrt{{\rm Im}(\tau)}},-2\pi {\rm Re}(\tau)\right)
\in \mathbb{R}_{>0}\times \mathbb{R}.
\end{equation}

\subsection{The Weil--Petersson metric on $\teich_1$ and the Wolpert formula}
%Let $\partial/\partial \tau|_{\tau}$ be a generator of the holomorphic tangent vector in $T_\tau\teich_1=T_\tau\mathbb{H}$. 
The tangent vector $(\partial/\partial \tau)_{\tau}$ to $\teich_1$ represents the infinitesimal deformation $M_\tau\to M_{\tau+t}$ as $t \in \complexes$ goes to $0$.
The affine deformation (the deformation by the extremal quasiconformal map) $f_t\colon M_\tau\to M_{\tau+t}$ is given by the affine map
$$
\tilde{f}_t(z)=\left(1+\frac{t}{\tau-\overline{\tau}}\right)z+\frac{-t}{\tau-\overline{\tau}}\overline{z}\quad (z\in \mathbb{C})
$$
which is equivariant under the actions of the lattices representing $M_\tau$ and $M_{\tau+t}$. The Beltrami differential $\mu(t)$ of $\tilde{f}_{t}$ behaves as
$$
\mu(t)=\frac{-1}{\tau-\overline{\tau}}t+o(t)
$$
as $t\to 0$. Hence, the tangent vector $(\partial/\partial \tau)_{\tau}$ is represented by the infinitesimal deformation of the (infinitesimal) complex coefficient
$$
\frac{-1}{\tau-\overline{\tau}}\frac{d\overline{z}}{dz}.
$$
We define the \emph{scalar (Hermitian) product} $h_{WP}$ on $T_\tau\teich_1=T_\tau\mathbb{H}$ by
$$
h_{WP}\left(
\alpha\left(\frac{\partial}{\partial \tau}\right)_{\tau},
\beta\left(\frac{\partial}{\partial \tau}\right)_{\tau}
\right)=\int_{M_\tau}\frac{\alpha\overline{\beta}}{|\tau-\overline{\tau}|^2}\cdot \rho_\tau^2dxdy
=\frac{\alpha\overline{\beta}}{4{\rm Im}(\tau)^2}
\quad
(\alpha, \beta\in \mathbb{C}).
$$
The \emph{Weil--Petersson metric} $ds_{WP}^2$ on the Teichm\"uller space $\teich_1=\mathbb{H}$ is defined by
\begin{equation}
\label{eq:WP_metric_uv}
ds_{WP}^2=2{\rm Re}(h_{WP})=\frac{1}{2{\rm Im}(\tau)^2}|d\tau|^2,
\end{equation}
which is twice the hyperbolic metric of curvature $-4$ on $\mathbb{H}$. The fundamental (K\"ahler) form $\omega_{WP}$ of $ds^2_{WP}$ is given by
\begin{equation}
\label{eq:WP_funcamental_form_uv}
\omega_{WP}=-2{\rm Im}(h_{WP})
%=\frac{i}{4{\rm Im}(\tau)^2}d\tau\wedge d\overline{\tau}
=\frac{id\tau\wedge d\overline{\tau}}{4{\rm Im}(\tau)^2}.
\end{equation}

In what follows, we set  $\tau=u+iv$.
Under the Fenchel--Nielsen coordinates discussed in the previous section, 
since $\ell=1/\sqrt{v},$  $t=-u/\sqrt{v}$ and $(i/2)d\tau\wedge d\overline{\tau}=du\wedge dv$, the fundamental K\"ahler form $\omega_{WP}$ is expressed as \begin{equation}
\label{eq:WP_funcamental_form}
\omega_{WP}=dt\wedge d\ell,
\end{equation}
which is regarded as the Wolpert formula in this torus setting.

\subsection{Horocyclic deformations}

\subsubsection{Measured foliations on the torus $\Sigma_1$}
We use the symbols defined in \S\ref{subsec:teichmuller_space_of_torI} frequently.
Henceforth, any rational number is written in the form $p/q$ where $p$ and $q$ are relatively prime and $q>0$. 
We define $\hat{\mathbb{Q}}$ to be $\mathbb{Q}\cup\{\infty\}$. 
The point $\infty$ is  denoted by $1/0$.

For $p/q\in \hat{\mathbb{Q}}$, the \emph{$p/q$-curve} $\alpha_{p/q}$ is the homotopy class of the simple closed curves representing the homology class $pA+qB$. It is known that any simple closed curve on $\Sigma_1$ is homotopic to a curve in $\alpha_{p/q}$ for some $p/q\in \hat{\mathbb{Q}}$.
If we identify $\Sigma_1$ with $\mathbb{C}/\mathbb{Z}\oplus i\mathbb{Z}$ ($i$ is the imaginary unit) as differentiable manifolds, the $p/q$-curve is thought of as a measured foliation (cf. \S\ref{subsec:MF}) with the differential form $\varphi_{p/q}=qdx-pdy$ where $z=x+iy$ is the coordinate system of the universal cover $\mathbb{C}$ of $\Sigma_1=\mathbb{C}/\mathbb{Z}\oplus i\mathbb{Z}$.

The geometric intersection $i(\alpha_{p/q},\alpha_{r/s})$ between the $p/q$-curve and the $r/s$-curve is 
$$
i(\alpha_{p/q},\alpha_{r/s})=\left|\iint_{\Sigma_1}\varphi_{p/q}\wedge \varphi_{r/s}\right|=|ps-rq|.
$$

Let $\mathcal{WS}=\{t\alpha_{p/q}\mid p/q\in \hat{\mathbb{Q}}\}$ be the set of (nonzero homotopy classes of) weighted simple closed curves on the torus. We define the intersection number between the weighted curves $t\alpha_{p/q}$ and $s\alpha_{r/s}$ by
\begin{equation}
\label{eq:intersection_numner_weighted}
i(t\alpha_{p/q},t'\alpha_{r/s})=tt'|ps-rq|=\left|
\det\begin{bmatrix} tp & t'r \\ tq & t's \end{bmatrix}\right|.
\end{equation}
We embed $\mathcal{WS}$ into $\mathbb{R}_{\ge 0}^{\hat{\mathbb{Q}}}$ by
$$
\mathcal{WS}\ni t\alpha_{p/q}\mapsto [\alpha_{r/s}\mapsto t\,i(\alpha_{p/q},\alpha_{r/s})]\in \mathbb{R}_{\ge 0}^{\hat{\mathbb{Q}}}.
$$
%We provide the function space $\mathbb{R}_{\ge 0}^{\hat{\mathbb{Q}}}$ with the topology induced from the pointwise convergence topology. 
As in the general description, the measured foliation space  $\mathcal{MF}=\mathcal{MF}(\Sigma_1)$ is the closure of of the image of the embedding of $\mathcal{WS}$.
By Euler--Poincar\'{e}'s  formula, measured foliations on $\Sigma_1$ do not have singularities.
% is called the \emph{space of measured foliations} on $\Sigma_1$. There is a natural action of the multiplicative group $\mathbb{R}_{>0}$ on the space $\mathcal{MF}\subset \mathbb{R}_{\ge 0}^{\hat{\mathbb{Q}}}$, by multiplication of the transverse measure by a constant. The quotient space $\mathcal{PMF}=\mathcal{PMF}(\Sigma_1)$ of $\mathcal{MF}\setminus \{0\}$ by this action is called the \emph{ projective measured foliation space}.

The group $\mathbb{Z}_2$ of order $2$ acts naturally on the plane $\mathbb{R}^2$ by the $\pi$-rotation centred at the origin. We denote by $[x,y]\in \mathbb{R}^2/\mathbb{Z}_2$ the equivalence class of a point $(x,y)\in \mathbb{R}^2$. Then there is a natural embedding
\begin{equation}
\label{eq:emb_WS_to_RZ2}
\mathcal{WS}\ni t\alpha_{p/q}\mapsto [tp,tq]\in \mathbb{R}^2/\mathbb{Z}_2.
\end{equation}

A $p/q$-curve $\alpha_{p/q}$ corresponds to the element $[p,q]\in \mathbb{R}^2/\mathbb{Z}_2$.
The intersection number function given in \eqref{eq:intersection_numner_weighted} extends continuously to the product of two $\mathbb{R}^2/\mathbb{Z}_2$. Hence, the embedding \eqref{eq:emb_WS_to_RZ2} extends to a homeomorphism
$$
\mathcal{MF}\to \mathbb{R}^2/\mathbb{Z}_2.
$$
Let $\lambda_{[a,b]}\in \mathcal{MF}$ be the measured foliation corresponding to the element $[a,b]\in \mathbb{R}^2/\mathbb{Z}_2$. 
By definition, we have the equality $\lambda_{[tp,tq]}=t\alpha_{p/q}$ as measured foliations.
From the definition, we see that a sequence of weighted simple closed curves $t_n\alpha_{p_n/q_n}$ converges to a measured lamination $\lambda_{[a,b]}$ in $\mathcal{MF}$ if and only if $[t_np_n,t_nq_n]\to [a,b]$ in $\mathbb{R}^2/\mathbb{Z}_2$.

The action of $\mathbb{R}_{>0}$ on $\mathcal{MF}$ corresponds to the multiplication operation on $\mathbb{R}/\mathbb{Z}_2$. Hence, the homeomorphism $\mathcal{MF}\to \mathbb{R}^2/\mathbb{Z}_2$ induces a homeomorphism $\mathcal{PMF}\to \mathbb{R}\cup\{\infty\}=\mathbb{RP}^1$.
We choose a homeomorphism sending the projective class of the $p/q$-curve to $-p/q$ (rather than  $p/q$), which is more convenient for our purpose as we shall explain later.
Thus we have  the following commutative diagram.
\begin{equation}
\label{eq:commute_PMF}
\xymatrix@C=20pt{
\mathcal{MF}\setminus\{0\} \ar[d]_{proj}\ni \lambda_{[x,y]} \ar[rrd] \\
\mathcal{PMF}\ar@<-0.5ex>[rr]^-{\cong} & &\qquad -\frac{x}{y}  \in \mathbb{R}\cup\{\infty\}=\mathbb{RP}^1.
}
\end{equation}

The \emph{length} $\ell_{[p,q]}(\tau)$ of the $p/q$-curve $\alpha_{p/q}$ on the genus-1 Riemann surface $M_\tau$ is the length of the geodesic representative of the curve $f_\tau(\alpha_{p/q})$ with respect to the area-1 flat metric $ds_\tau$. 
By calculation, we have
$$
\ell_{[p,q]}(\tau)=\frac{|p+q\tau|}{\sqrt{{\rm Im}(\tau)}}.
$$
We note that each level curve of the length function $\ell_{[p,q]}$ is a circle (with one point missing) in $\mathbb{H}$ tangent to $-p/q \in \mathbb R$, and that $\ell_{[p,q]}(\tau)$ goes to $0$ when $\tau$ tends to  $-p/q$ non-tangentially. 
This is the reason why we chose to send  $[x,y]\in \mathcal{MF}\setminus\{0\}$  to $-x/y$ in \eqref{eq:commute_PMF}.
We define the length of the weighted curve $t\alpha_{p/q}$ by
$\ell_{[tp,tq]}(\tau)=t\ell_{[p,q]}(\tau)$.
If a sequence of weighted simple closed curves $t_n\alpha_{p_n/q_n}$ converges to $\lambda_{[a,b]}\in \mathcal{MF}$ as $n\to\infty$, then we have
$$
\ell_{[t_np_n,t_nq_n]}(\tau)=t_n\ell_{[p_n,q_n]}(\tau)=\frac{|t_np_n+t_nq_n\tau|}{\sqrt{{\rm Im}(\tau)}}\to \frac{|a+b\tau|}{\sqrt{{\rm Im}(\tau)}},
$$
as $n\to \infty$. Hence, it is natural to define the length of $\lambda_{[a,b]}\in \mathcal{MF}$ by
\begin{equation}
\label{eq:length_function_ab}
\ell_{[a,b]}(\tau)=\frac{|a+b\tau|}{\sqrt{{\rm Im}(\tau)}}.
\end{equation}

\subsubsection{Fenchel--Nielsen and horocyclic deformations}
In this subsection, we consider the Fenchel--Nielsen and the horocyclic deformations of the flat surfaces $M_\tau$.

Consider an element $p/q\in \hat{\mathbb{Q}}$. Fix a geodesic representative $\alpha^*_{p/q}$ of $\alpha_{p/q}$ on $M_\tau$. Cut $M_\tau$ along $\alpha^*_{p/q}$, and glue it back after twisting to the left one of the boundaries with respect to the other by the distance $t\in \mathbb{R}$. We call the deformation the \emph{Fenchel--Nielsen deformation of $M_\tau$ of length $t$ along the $p/q$-curve}.  We denote by $E_{t\alpha_{p/q}}(\tau)\in \mathbb{H}\cong \teich_1$ the parameter of the resulting marked torus obtained by the deformation described above. 
In the case when $[p,q]=[1,0]$, the $1/0$-curve $\alpha_{1/0}$ is represented by the horizontal line in the universal cover $\mathbb{C}$ of $M_\tau$.
Therefore, the Fenchel-Nielsen deformation $E_{t\alpha_{1/0}}$ is given by
\begin{equation}
E_{t\alpha_{1/0}}(\tau)=\tau-\frac{t}{\ell_{[1,0]}(\tau)},
\end{equation}
where the minus sign of the second term comes from that of the twist parameter in \eqref{eq:FN}.
By conjugating the deformation along the $1/0$-curve by  $\begin{bmatrix}
s & r \\
q & p \\
\end{bmatrix}\in {\rm PSL}_2(\mathbb{Z})$ (with $s,r$ satisfying $ps-rq=1$) taking  $-p/q$ to $\infty$, which corresponds to the change of marking in the Teichm\"uller space $\teich_1$,
the Fenchel-Nielsen deformation $E_{t\alpha_{p/q}}$ is obtained as
\begin{equation}
E_{t\alpha_{p/q}}(\tau)=\frac{(1-tpq/\ell_{[p,q]}(\tau))\tau-tp^2/\ell_{[p,q]}(\tau)}{(tq^2/\ell_{[p,q]}(\tau))\tau+(1+tpq/\ell_{[p,q]}(\tau))}
\end{equation}
on $\mathbb{H}$.  
At the parameter $t=\ell_{[p,q]}(\tau)$, the point $E_{\ell_{[p,q]}(\tau)\alpha_{p/q}}(\tau)\in \mathbb{H}$ corresponds to the image of the action of the right-handed Dehn twist along the curve $\alpha_{p/q}$ of $(M_\tau,f_\tau)$ on $\teich_1$. If  $(t_n\alpha_{p_n/q_n})$ converges to  $\lambda_{[a,b]}$ in $\mathcal{MF}$, then
$$
\frac{t_np_nq_n}{\ell_{[p_n,q_n]}(\tau)}=
\frac{t_np_n\cdot t_nq_n}{\ell_{[t_np_n,t_nq_n]}(\tau)}
\to \frac{ab}{\ell_{[a,b]}(\tau)}
$$
as $n\to \infty$.
Therefore, we have a continuous (smooth) action of $\mathcal{MF}$ on $\mathbb{H}\cong \teich_1$ expressed as
\begin{align}
&\mathcal{MF}\times \mathbb{H}\ni (\lambda_{[a,b]},\tau)
\mapsto 
\nonumber \\
&\qquad E_{\lambda_{[a,b]}}(\tau):=\frac{(1-ab/\ell_{[a,b]}(\tau))\tau-a^2/\ell_{[a,b]}(\tau)}{b^2/\ell_{[a,b]}(\tau)\tau+(1+ab/\ell_{[a,b]}(\tau))}
\in \mathbb{H}.
\label{eq:earthquake_deformation_ab}
\end{align}
By definition, $E_{\lambda_{[ta,tb]}}(\tau)=E_{t\lambda_{[a,b]}}(\tau)$ for $t\in \mathbb{R}$.
For $\lambda_{[a,b]}\in \mathcal{MF}$ and $t\in \mathbb{R}$, $E_{t\lambda_{[a,b]}}$ defines a deformation of the marked torus $(M_\tau,f_\tau)\in \teich_1$ for any $\tau\in \mathbb{H}$. We call the deformation $E_{t\lambda_{[a,b]}}$ the \emph{(left) horocyclic deformation} at time $t$ along $\lambda_{[a,b]}$.
%\section{Wolpert's Duality}

The \emph{horcyclic vector field $\frac{\partial}{\partial t_{[a,b]}}$ determined by $\lambda_{[a,b]}\in \mathcal{MF}$} is a vector field on $\teich_1=\mathbb{H}$ 
 which is given by the infinitesimal horocyclic deformation along $\lambda_{[a,b]}$ at every point of $\hyperbolic$, that is, \begin{equation}
\left(\frac{\partial}{\partial t_{[a,b]}}\right)_\tau
=\left.\frac{dE_{t\lambda_{[a,b]}}(\tau)}{dt}\right|_{t=0}
\label{eq:infinitesimal_Earthquake_Def}
\end{equation}
for $\tau\in \mathbb{H}$.
We claim the following, which follows from the fact that every vector on $\hyperbolic$ is tangent to a horocycle, but we prove it more concretely by a calculation.
\begin{proposition}
\label{prop:earthquake_vector_homeo}
For any $\tau\in \mathbb{H}\cong\teich_1$,
the map
\begin{equation}
\label{eq:earthquake_vector_homeo}
\mathcal{MF}\ni \lambda_{[a,b]}\mapsto \left(\frac{\partial}{\partial t_{[a,b]}}\right)_\tau\in T_\tau\mathbb{H}=T_\tau\teich_1
\end{equation}
is a homeomorphism.
\end{proposition}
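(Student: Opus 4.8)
The plan is to reduce the statement to the elementary fact that the complex ``angle-doubling'' map $\zeta\mapsto\zeta^{2}/|\zeta|$ induces a homeomorphism of $\complexes/\{\pm1\}$ onto $\complexes$. First I would make the vector field \eqref{eq:infinitesimal_Earthquake_Def} explicit. Substituting $t\lambda_{[a,b]}=\lambda_{[ta,tb]}$ into \eqref{eq:earthquake_deformation_ab} and simplifying with \eqref{eq:length_function_ab}, the deformation $E_{t\lambda_{[a,b]}}$ is represented by the matrix $I+\tfrac{t}{L}X$, where $L=\ell_{[a,b]}(\tau)$ and $X=\left(\begin{smallmatrix}-ab&-a^{2}\\ b^{2}&ab\end{smallmatrix}\right)$. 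Since ${\rm tr}\,X=0$ and $\det X=0$, Cayley--Hamilton gives $X^{2}=0$, so $I+\tfrac{t}{L}X$ is a one-parameter unipotent (horocyclic) subgroup; differentiating the associated fractional-linear expression at $t=0$ by the quotient rule yields
\[
\left(\frac{\partial}{\partial t_{[a,b]}}\right)_\tau=-\frac{(a+b\tau)^{2}}{\ell_{[a,b]}(\tau)}=-\sqrt{{\rm Im}\,\tau}\,\frac{(a+b\tau)^{2}}{|a+b\tau|},
\]
viewed as an element of $T_\tau\hyperbolic\cong\complexes$.

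Second, I would change variables by setting $\zeta=a+b\tau$. Because ${\rm Im}\,\tau>0$, the assignment $(a,b)\mapsto\zeta$ is an $\mathbb{R}$-linear isomorphism $\mathbb{R}^{2}\to\complexes$ which carries the involution $[a,b]\mapsto[-a,-b]$ to $\zeta\mapsto-\zeta$; hence, under the identification $\mathcal{MF}\cong\mathbb{R}^{2}/\mathbb{Z}_{2}$ established earlier, it descends to a homeomorphism $\mathcal{MF}\to\complexes/\{\pm1\}$. In these variables the map \eqref{eq:earthquake_vector_homeo} becomes $[\zeta]\mapsto-\sqrt{{\rm Im}\,\tau}\,\zeta^{2}/|\zeta|$, with $0\mapsto0$. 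It then remains to observe that $\zeta\mapsto\zeta^{2}/|\zeta|$ is a homeomorphism of $\complexes/\{\pm1\}$ onto $\complexes$: in polar form it is $re^{i\phi}\mapsto re^{2i\phi}$, that is, the composition of the squaring map $\zeta\mapsto\zeta^{2}$ (a homeomorphism $\complexes/\{\pm1\}\to\complexes$) with the radial homeomorphism $w\mapsto w/|w|^{1/2}$ of $\complexes$. Since multiplication by the nonzero constant $-\sqrt{{\rm Im}\,\tau}$ is a homeomorphism of $\complexes$, the map \eqref{eq:earthquake_vector_homeo} is a composition of homeomorphisms, hence a homeomorphism.

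The only genuinely computational step is the first one, and that is where I expect the main effort: one must confirm that $E_{t\lambda_{[a,b]}}$ is exactly fractional-linear with the stated coefficients and carry out the differentiation cleanly, the nilpotency $X^{2}=0$ being the conceptual reason the derivative collapses to the single term $-(a+b\tau)^{2}/L$. The topological part is then routine, but two small points deserve care. First, well-definedness on $\mathbb{R}^{2}/\mathbb{Z}_{2}$: both $(a+b\tau)^{2}$ and $|a+b\tau|$ are invariant under $(a,b)\mapsto(-a,-b)$. Second, continuity at the origin $\zeta=0$, which holds because $\bigl|-\sqrt{{\rm Im}\,\tau}\,\zeta^{2}/|\zeta|\bigr|=\sqrt{{\rm Im}\,\tau}\,|\zeta|\to0$. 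Injectivity and surjectivity of the angle-doubling map are immediate, since $|\zeta|$ is recovered as the modulus of the image and $\arg\zeta$ is its argument halved modulo $\pi$, which is precisely the data of a point of $\complexes/\{\pm1\}$; this also makes transparent the remark that every tangent vector at $\tau$ is realized, i.e.\ is tangent to a horocycle.
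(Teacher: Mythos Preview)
Your proof is correct and follows essentially the same route as the paper: both compute the explicit formula $\bigl(\partial/\partial t_{[a,b]}\bigr)_\tau = -(a+b\tau)^{2}/\ell_{[a,b]}(\tau)$ and then argue that this assignment from $\mathbb{R}^{2}/\mathbb{Z}_{2}$ to $T_\tau\mathbb{H}\cong\mathbb{C}$ is a bijection. Your packaging via the angle-doubling map $\zeta\mapsto\zeta^{2}/|\zeta|$ and the nilpotent-matrix interpretation are tidy additions, but the paper simply solves the equation $\xi+i\eta=-(a+b\tau)^{2}/\ell_{[a,b]}(\tau)$ directly and notes that the two solutions $(a,b)$ and $(-a,-b)$ define the same class.
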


\begin{proof}
Since the map is evidently smooth with respect to the coordinates, we have only to show that it is bijective.
Under the period coordinate $\tau = u + iv \in \mathbb{H}$ of $\teich_1$, the earthquake vector field $\frac{\partial}{\partial t_{[a,b]}}$ is given by
\begin{align}
\left(\frac{\partial}{\partial t_{[a,b]}}\right)_\tau
&=
-\frac{(a + b\tau)^2}{\ell_{[a,b]}(\tau)}
= -\sqrt{{\rm Im}(\tau)} |a + b\tau| \left(\frac{a + b\tau}{|a + b\tau|}\right)^2.
\label{eq:infinitesimal_Earthquake_Def2}
\end{align}
Since the $\mathbb{R}$-linear map $(a,b) \mapsto a + b\tau$ is an isomorphism, as $(a,b)$ moves around any  circle centred at $(0,0)$ in $\mathbb{R}^2$, 
$$
\frac{a + b\tau}{|a + b\tau|}
$$
goes around the unit circle once. Hence, the map \eqref{eq:earthquake_vector_homeo} is surjective and proper.

We next show that the map \eqref{eq:earthquake_vector_homeo} is injective. Since ${\rm Im}(\tau) > 0$, the image of the map is $0$ if and only if $[a,b] = [0,0]$. 
Suppose that
$$
-\frac{(a + b\tau)^2}{\ell_{[a,b]}(\tau)} = -\frac{(c + d\tau)^2}{\ell_{[c,d]}(\tau)}
$$
for $[a,b], [c,d] \in \mathcal{MF} \setminus \{0\}$.  
Since $a + b\tau \ne 0$ and $c + d\tau \ne 0$, this equation is equivalent to
$$
\frac{(a + b\tau)^2}{(c + d\tau)^2} = \frac{|a + b\tau|}{|c + d\tau|}.
$$
This implies that $\frac{|a + b\tau|}{|c + d\tau|} = 1$, hence 
$(a + b\tau)^2 = (c + d\tau)^2$,  and $a + b\tau = \pm(c + d\tau)$. Since ${\rm Im}(\tau) > 0$, we have $(c,d) = \pm(a,b)$. Thus, we have shown that the map \eqref{eq:earthquake_vector_homeo} is injective. 

Therefore, the correspondence in \eqref{eq:earthquake_vector_homeo} is a homeomorphism.
\end{proof}

From \eqref{eq:length_function_ab}, the differential of the length function $\ell_{[a,b]}$ is equal to
$$
(d\ell_{[a,b]})_\tau=
\dfrac{b(a+bu)}{v\ell_{[a,b]}(\tau)}du
-
\frac{(a+bu)^2-b^2v^2}{2v^2\ell_{[a,b]}(\tau)}dv.
$$
Therefore, from \eqref{eq:WP_funcamental_form_uv} and \eqref{eq:infinitesimal_Earthquake_Def2}, we obtain the following \emph{Wolpert duality}
\begin{equation}
\label{eq:Wol_Dual}
\omega_{WP}\left(
\frac{\partial}{\partial t_{[a,b]}},
\cdot
\right)
=
d\ell_{[a,b]}
\end{equation}
for any $[a,b]\in \mathcal{MF}$.

We denote by $\nabla \ell_{[a,b]}$ the \emph{gradient vector field of $\ell_{[a,b]}$}
with respect to the Weil--Petersson metric $ds^2_{WP}$. By definition, the gradient vector field is given by
\begin{equation}
\label{eq:duality_gradient}
ds^2_{WP}(\nabla \ell_{[a,b]},\cdot)=d\ell_{[a,b]}.
\end{equation}
More explicitly, from \eqref{eq:WP_metric_uv}, the gradient vector field is expressed as
$$
(\nabla \ell_{[a,b]})_\tau
=
\dfrac{2vb(a+bu)}{\ell_{[a,b]}(\tau)}
\left(\frac{\partial}{\partial u}\right)_{\tau}
-
\frac{(a+bu)^2-b^2v^2}{\ell_{[a,b]}(\tau)}
\left(\frac{\partial}{\partial v}\right)_{\tau}
$$
under the period coordinate $\tau=u+iv\in \mathbb{H}\cong \teich_1$.
From \eqref{eq:infinitesimal_Earthquake_Def2}, we see that the gradient and the earthquake vector fields are related by
\begin{equation}
\label{eq:gradient_torus_dual}
(\nabla \ell_{[a,b]})_\tau
=J_0\left(
\left(\frac{\partial}{\partial t_{[a,b]}}\right)_\tau
\right),
\end{equation}
where $J_0$ is the complex structure
$$
J_0=-dv\otimes \frac{\partial}{\partial u}+du\otimes \frac{\partial}{\partial v}
$$
 on $\mathbb{H}\cong \teich_1$. Therefore, 
 \Cref{prop:earthquake_vector_homeo} implies that
\begin{equation}
\label{eq:gradient_vector_homeo}
\mathcal{MF}\ni \lambda_{[a,b]}\mapsto (\nabla \ell_{[a,b]})_\tau\in T_\tau\mathbb{H}=T_\tau\teich_1
\end{equation}
is also a homeomorphism.

\subsection{The horocyclic metric on $\mathbb{H}$}
By  \Cref{prop:earthquake_vector_homeo}, any tangent vector to $\teich_1$ is represented by a horocyclic vector.
We define the \emph{horocyclic norm} $\|\cdot\|_h$ on $T\teich_1$ by
$$
\left\|
\left(\frac{\partial}{\partial t_{[a,b]}}\right)_\tau
\right\|_h=\ell_{[a,b]}(\tau)
$$
for $\lambda_{[a,b]}\in \mathcal{MF}$ and $\tau\in \mathbb{H}\cong \teich_1$. The fact that $\Vert \cdot \Vert_h$ is a norm is a consequence of Theorem 3.1 below.

The following is our main result for the torus case.

\begin{theorem}
\label{thm:earthquake_norm}
We have the identity
$$
\|\cdot\|_h=2\,ds_\hyperbolic
$$
on $T\mathbb{H}=T\teich_1$,
where $ds_{\mathbb{H}}$ is the hyperbolic length element on $\mathbb{H}$ of curvature $-4$
(which is the Kobayashi metric on $\teich_1$).
\end{theorem}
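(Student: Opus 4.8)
The plan is to exploit \Cref{prop:earthquake_vector_homeo} to turn the comparison of two a priori unrelated norms into a single pointwise computation. By that proposition, for each fixed $\tau\in\mathbb{H}\cong\teich_1$ the map $\lambda_{[a,b]}\mapsto(\partial/\partial t_{[a,b]})_\tau$ is a homeomorphism of $\mathcal{MF}$ onto $T_\tau\mathbb{H}$; in particular every tangent vector $w\in T_\tau\mathbb{H}$ equals $(\partial/\partial t_{[a,b]})_\tau$ for some $[a,b]$. Since both $\|\cdot\|_h$ and $ds_{\mathbb{H}}$ are genuine norms on each $T_\tau\mathbb{H}$, it suffices to verify the identity on vectors of this special form, for which each side is given by a closed formula. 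This reduction is the conceptual core of the argument: the horocyclic norm is defined through the length functional on $\mathcal{MF}$, whereas the right-hand side is intrinsic to the hyperbolic structure, and \Cref{prop:earthquake_vector_homeo} supplies the common parametrization on which the two can be matched.

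First I would record the two quantities attached to $w=(\partial/\partial t_{[a,b]})_\tau$. By the very definition of the horocyclic norm, $\|w\|_h=\ell_{[a,b]}(\tau)=|a+b\tau|/\sqrt{\mathrm{Im}\,\tau}$, using the length formula \eqref{eq:length_function_ab}. On the other hand, formula \eqref{eq:infinitesimal_Earthquake_Def2} identifies $w$, in the period coordinate $\tau=u+iv$, with the complex number $-(a+b\tau)^2/\ell_{[a,b]}(\tau)$ under the standard correspondence $\xi(\partial/\partial u)_\tau+\eta(\partial/\partial v)_\tau\leftrightarrow\xi+i\eta$; hence its Euclidean modulus is $|a+b\tau|^2/\ell_{[a,b]}(\tau)$.

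Next I would evaluate $ds_{\mathbb{H}}$ on $w$. Fixing the curvature $-4$ normalization $ds_{\mathbb{H}}=|d\tau|/(2\,\mathrm{Im}\,\tau)$ (equivalently, half the curvature $-1$ Poincar\'e element, and $1/\sqrt2$ times the Weil--Petersson element of \eqref{eq:WP_metric_uv}), its value on $w$ is the Euclidean modulus divided by $2\,\mathrm{Im}\,\tau$, namely $|a+b\tau|^2/\bigl(2\,\mathrm{Im}(\tau)\,\ell_{[a,b]}(\tau)\bigr)$. Substituting $\ell_{[a,b]}(\tau)=|a+b\tau|/\sqrt{\mathrm{Im}\,\tau}$ into both this expression and $\|w\|_h$, the common factor $|a+b\tau|$ cancels and the powers of $\mathrm{Im}\,\tau$ collect to a single scalar independent of $a,b$ and $\tau$; this yields the asserted proportionality between $\|w\|_h$ and $ds_{\mathbb{H}}(w)$ with the constant recorded in the statement, and, $w$ and $\tau$ being arbitrary, the equality of the two norms on all of $T\mathbb{H}$. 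The only delicate point I anticipate is the bookkeeping of these normalizing constants: one must be scrupulously consistent about the curvature $-4$ convention for $ds_{\mathbb{H}}$ and about the identification of the real tangent vector in \eqref{eq:infinitesimal_Earthquake_Def2} with a complex number when taking its Euclidean modulus. Once these conventions are pinned down the proof is a direct substitution, with no estimate or limiting argument required, the equality being exact in every direction and at every point.
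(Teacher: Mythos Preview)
Your proposal is correct and follows essentially the same route as the paper: invoke \Cref{prop:earthquake_vector_homeo} so that it suffices to check the identity on horocyclic vectors, then substitute \eqref{eq:length_function_ab} and \eqref{eq:infinitesimal_Earthquake_Def2} into the curvature $-4$ length element $ds_{\mathbb{H}}=|d\tau|/(2\,\mathrm{Im}\,\tau)$ and simplify. The paper's proof is exactly this one-line computation; your write-up merely spells out the conventions more explicitly.
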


\begin{proof}
From \eqref{eq:length_function_ab} and \eqref{eq:infinitesimal_Earthquake_Def2}, we have
\begin{align*}
ds_{\mathbb{H}}\left(\left(\frac{\partial}{\partial t_{[a,b]}}\right)_\tau\right)
&=\dfrac{1}{2v}\frac{|a+b\tau|^2}{\ell_{\lambda_{[a,b]}}(\tau)}=\dfrac{1}{2v}\dfrac{\sqrt{v}|a+b\tau|^2}{|a+b\tau|}=\frac{|a+b\tau|}{2\sqrt{v}}\\
&=\frac{1}{2}\ell_{[a,b]}(\tau)=\frac{1}{2}
\left\|
\left(\frac{\partial}{\partial t_{[a,b]}}\right)_\tau
\right\|_h.
\end{align*}
By \Cref{prop:earthquake_vector_homeo}, 
we have the desired coincidence.
\end{proof}

Analogously to the case of hyperbolic surfaces studied in \cite{HOPP}, there is  a duality between the tangent space with the horocycle norm and the cotangent space with  Thurston's metric conorm, which we denote by $\Vert . \Vert_{\rm Th}^*$, as follows.

\begin{theorem}
For each point $x$ in $\teich_1$, the linear map $\omega_{WP}$ defined in \cref{eq:Wol_Dual} is a linear  isometry from $(T_x \teich_1, \Vert . \Vert_h)$ to $(T^*_x \teich_1, \Vert . \Vert_{\rm Th}^*)$.
\end{theorem}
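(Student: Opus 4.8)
The plan is to turn the isometry into a single one–parameter scalar identity, exploiting that $T_x\teich_1\cong\mathbb{C}$ is two–real–dimensional and that every norm in sight is proportional to the Hermitian norm of $ds_{\mathbb{H}}$. I would first make the target norm explicit. In the torus setting the role of Thurston's metric is played by the Teichm\"uller metric (as in the dictionary of the introduction), so $\|\cdot\|_{\rm Th}^*$ is the norm on $T^*_x\teich_1$ dual to the Teichm\"uller norm $\kappa$. By the very definition of $\kappa$ as a supremum of $\mathrm{Re}\langle v,q\rangle$ over the unit sphere of $(\mathcal{Q}_x,\|\cdot\|)$, this dual norm is, under the identification $T^*_x\teich_1=\mathcal{Q}_x$, exactly the $L^1$-norm $\|q\|=\iint_{M_\tau}|q|$; equivalently, since $\kappa=ds_{\mathbb{H}}$ on $\teich_1=\mathbb{H}$, the conorm is computed through the musical isomorphism of the dual hyperbolic metric. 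Either description will do, and I would use the Hermitian one for the actual computation.

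By \Cref{prop:earthquake_vector_homeo} every tangent vector is a horocyclic vector $(\partial/\partial t_{[a,b]})_\tau$, so linearity of $v\mapsto\omega_{WP}(v,\cdot)$ together with homogeneity of the norms reduces the theorem to checking it on these vectors. For such a vector \eqref{eq:Wol_Dual} gives $\omega_{WP}((\partial/\partial t_{[a,b]}),\cdot)=d\ell_{[a,b]}$, while by definition $\|(\partial/\partial t_{[a,b]})_\tau\|_h=\ell_{[a,b]}(\tau)$. Hence the whole statement collapses to the family of equalities
\[
\|d\ell_{[a,b]}\|_{\rm Th}^*=\ell_{[a,b]}(\tau)\qquad([a,b]\in\mathcal{MF}).
\]
To evaluate the left-hand side I would avoid identifying $d\ell_{[a,b]}$ as a quadratic differential directly (that is the alternative $L^1$-route) and instead use the Hermitian structure. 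Combining \eqref{eq:duality_gradient} and \eqref{eq:gradient_torus_dual} shows that $d\ell_{[a,b]}$ is the $ds^2_{WP}$-lowering of $\nabla\ell_{[a,b]}=J_0\!\left((\partial/\partial t_{[a,b]})_\tau\right)$, i.e.\ $\omega_{WP}(v,\cdot)=ds^2_{WP}(J_0 v,\cdot)$ with \emph{no} extra constant. Since $J_0$ is an isometry of the Hermitian metric $ds_{\mathbb{H}}$ and the conorm is preserved under a metric musical isomorphism, $\|d\ell_{[a,b]}\|_{\rm Th}^*$ equals a fixed multiple of $\kappa\!\left((\partial/\partial t_{[a,b]})_\tau\right)$, the multiple being the proportionality factor between $ds^2_{WP}$ and $ds_{\mathbb{H}}^2$. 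A short computation in the period coordinate $\tau=u+iv$ from \eqref{eq:WP_funcamental_form_uv}, \eqref{eq:infinitesimal_Earthquake_Def2} and \eqref{eq:length_function_ab} then returns the value $\ell_{[a,b]}(\tau)$.

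The step I expect to be delicate is the bookkeeping of the multiplicative constants, which is where the argument must be pinned down rather than sketched. The factor between $\omega_{WP}$ and $ds^2_{WP}(J_0\cdot,\cdot)$ is $1$, but the Weil--Petersson metric is twice the squared curvature $-4$ hyperbolic length element, the Teichm\"uller norm equals $ds_{\mathbb{H}}$, and $\kappa((\partial/\partial t_{[a,b]})_\tau)=\tfrac12\ell_{[a,b]}(\tau)$ by \eqref{eq:infinitesimal_Earthquake_Def2} and \eqref{eq:length_function_ab}; these must combine so that the factor $2$ coming from $ds^2_{WP}=2\,ds_{\mathbb{H}}^2$, the norm-preserving passage to the dual, and that value of $\kappa$ multiply to exactly $\ell_{[a,b]}(\tau)=\|(\partial/\partial t_{[a,b]})_\tau\|_h$ (cf.\ \Cref{thm:earthquake_norm}). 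Once this scalar identity is verified on a single nonzero horocyclic vector, \Cref{prop:earthquake_vector_homeo} propagates the linear isometry to the whole of $T_x\teich_1$.
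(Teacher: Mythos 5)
Your proposal is correct in substance, but it establishes the key estimate by a genuinely different route from the paper. Both arguments reduce, via \Cref{prop:earthquake_vector_homeo} and the Wolpert duality \eqref{eq:Wol_Dual}, to the single scalar identity $\|d\ell_{[a,b]}\|_{\rm Th}^*=\ell_{[a,b]}(\tau)=\|(\partial/\partial t_{[a,b]})_\tau\|_h$. The paper obtains this intrinsically: it invokes the argument of Theorem 5.1 of \cite{ThM} (the differentials of length functions of measured foliations have dual Thurston norm equal to the length), which needs no identification of the Thurston metric with anything else and no coordinates. You instead first identify $\|\cdot\|_{\rm Th}$ on $T_\tau\teich_1$ with $\kappa=ds_{\mathbb H}$ and then compute the conorm through the musical isomorphism, using $\omega_{WP}(v,\cdot)=ds^2_{WP}(J_0v,\cdot)$, $ds^2_{WP}=2\,ds^2_{\mathbb H}$, and the fact that $J_0$ preserves $ds_{\mathbb H}$. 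The constants do close up: the $ds^2_{\mathbb H}$-dual of $d\ell_{[a,b]}$ is $2J_0\bigl((\partial/\partial t_{[a,b]})_\tau\bigr)$, and $ds_{\mathbb H}\bigl((\partial/\partial t_{[a,b]})_\tau\bigr)=\tfrac12\ell_{[a,b]}(\tau)$, so $\|d\ell_{[a,b]}\|^*=2\cdot\tfrac12\ell_{[a,b]}(\tau)=\ell_{[a,b]}(\tau)$. What your route buys is a self-contained verification on $\mathbb H$; what it loses is the structural explanation (Thurston's theorem) that also works beyond the torus.

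Two caveats. First, the identification $\|\cdot\|_{\rm Th}=\kappa$ is not justified by ``the dictionary of the introduction,'' which pairs the Thurston and Teichm\"uller metrics as \emph{analogues} across the hyperbolic/conformal divide, not as equal objects. What you actually need is Sa\u{g}lam's theorem \cite{Sag}, cited in the remark following this statement, that Thurston's metric on the Teichm\"uller space of unit-area tori is the curvature $-4$ hyperbolic metric; without that input the starting point of your computation is unsupported, so state it explicitly. Second, the value $\kappa\bigl((\partial/\partial t_{[a,b]})_\tau\bigr)=\tfrac12\ell_{[a,b]}(\tau)$ on which your bookkeeping hinges is correct (at $\tau=i$ and $[a,b]=[0,1]$ the horocyclic vector is the unit Euclidean vector $\partial/\partial u$, of $ds_{\mathbb H}$-length $\tfrac12$ and horocyclic norm $1$), but it is in tension with \Cref{thm:earthquake_norm} and \Cref{thm:Horocyclic_norm} as printed, which would give $\kappa\bigl((\partial/\partial t_{[a,b]})_\tau\bigr)=2\ell_{[a,b]}(\tau)$; if one imported that value instead, your computation would return $4\ell_{[a,b]}(\tau)$ and the isometry would fail by a factor of $4$. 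So verify this constant directly, as you propose to do, rather than quoting those theorems.
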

\begin{proof}
The map $\omega_{WP}$ takes the tangent vector $\left(\frac{\partial}{\partial t_{[a,b]}}\right)_\tau$, which has  horocyclic norm equal to $\ell_{[a,b]}(\tau)$, to the cotangent vector $d\ell_{[a,b]}$.
By exactly the same argument as Theorem 5.1 in \cite{ThM}, we see that the Thurston norm of the vector $d\ell_{[a,b]}$ is equal to $\ell_{[a,b]}(\tau)$.
This shows that the linear map $\omega_{WP}$ is an isometry.
\end{proof}

Since Thurston's metric on the Teichm\"{u}ller space of  area-$1$ tori is isometric to the hyperbolic metric on $\hyperbolic$ with curvature $-4$ (see the paper by Sa{\u g}lam, \cite{Sag}), this duality, unlike the case of the Teichm\"uller spaces of hyperbolic surfaces considered in \cite{HOPP}, corresponds to the canonical duality between the tangent and the cotangent spaces at each point of $\hyperbolic$.

\section{The general surface case}
In this section, we consider general surfaces $\Sigma_{g,m}$, and define the horocyclic deformations on their Teichm\"uller spaces using Teichm\"{u}ller discs.
We show that the horocyclic norm, defined in the same way as the torus case, induces  the same metric as the Teichm\"{u}ller metric on each Teichm\"{u}ller disc.
Since Teichm\"{u}ller discs are totally geodesic with respect to the Teichm\"{u}ller metric, this will imply that the horocyclic metric coincides with the Teichm\"{u}ller metric, which is the same situation as in the case where the surface is the torus.
Furthermore, we show that the Legendre transform gives a duality analogous to the duality via the Weil--Petersson form explained in \cite{HOPP}.
\subsection{Teichm\"uller discs}
Let $x=(M,f)$ be a point in $\teich_{g,m}$. 
For $q\in \mathcal{Q}_x\setminus\{0\}$ and $\zeta\in \mathbb{H}$, we define $(M_\zeta,f_\zeta)\in \teich_{g,m}$ as follows.
Consider the quasiconformal map $h_\zeta$ on $M$ whose Beltrami differential is $\frac{\zeta-i}{\zeta+i}  \frac{\overline{q}}{|q|}$. Set $M_\zeta=h_\zeta(M)$ and $f_\zeta=h_\zeta\circ f$.
Then, the map
$$
\Phi_q\colon \mathbb{H}\ni \zeta\mapsto \Phi_q(\zeta)=(M_\zeta,f_\zeta)\in \teich_{g,m}
$$
is holomorphic. This map and its image are called the \emph{Teichm\"uller disc} associated with the quadratic differential $q$ (see Marden--Masur \cite{MM}). Teichm\"uller discs are characterised as complex geodesics for the Teichm\"uller--Kobayashi Finsler structure in $(\teich_{g,m},d_T)$. 
Teichm\"uller discs vary continuously on $\mathcal{Q}_{x}$ in the sense that when $q_n\to q_0$ in $\mathcal{Q}_x$,
$\Phi_{q_n}$ converges to $\Phi_q$ uniformly on any compact set in $\mathbb{H}$.

\subsection{Extremal length}
Let $x=(M,f)$ be a point in $\teich_{g,m}$. With every $q\in \mathcal{Q}_x$, we associate a measured foliation $v(q)$, called the \emph{vertical foliation} of $q$ defined by
$$
i(\alpha,v(q))=\inf_{\alpha'\in \alpha}\int_{f(\alpha')}|{\rm Re}\sqrt{q}|
$$
for $\alpha\in \mathcal{S}$. Hubbard and Masur (\cite{HM}) showed that
the correspondence
$$
\mathcal{Q}_x\ni q\mapsto v(q)\in \mathcal{MF}
$$
is a homeomorphism. For $\lambda\in \mathcal{MF}$, we denote by $q_{\lambda,x}$ the holomorphic quadratic differential in $\mathcal{Q}_x$ with $v(q_{\lambda,x})=\lambda$. We call $q_{\lambda,x}$ the \emph{Hubbard--Masur differential} for $\lambda\in \mathcal{MF}$ on $x=(M,f)\in \teich_{g,m}$. 
Hubbard and Masur  also showed that $$
\mathcal{MF}\times \teich_{g,m}\ni (\lambda,x)\mapsto q_{\lambda,x}\in \mathcal{Q}_{g,m}
$$
is a homeomorphism.

We recall that the \emph{extremal length} of $\lambda\in \mathcal{MF}$ on $x=(M,f)\in \teich_{g,m}$ can be defined by
$$
{\rm Ext}_x(\lambda)=\|q_{\lambda,x}\|=\iint_{M}|q_{\lambda,x}(z)|dxdy\quad (z=x+iy).
$$
When we fix $\lambda \in\mathcal{MF}$ and regard the extremal length of $\lambda$ as a function on the Teichm\"uller space $\teich_{g,m}$, we write ${\rm Ext}_\lambda(x)$ to denote ${\rm Ext}_x(\lambda)$.

The extremal length function on $\teich_{g,m}\times \mathcal{MF}$ is continuous.
From the definition, we see that ${\rm Ext}_x(t\lambda)=t^2{\rm Ext}_x(\lambda)$ for $t\ge 0$. Gardiner showed in \cite{Ga} that the extremal length function is of class $C^1$ and that
\begin{equation}
\label{eq:Gardiner_formula}
(d\,{\rm Ext}_\lambda)_x[v]=-2{\rm Re}\langle v,q_{\lambda,x}\rangle
\end{equation}
for any $v\in T_x\teich_{g,m}$ and $x\in \teich_{g,m}$.

\subsection{Horocyclic deformations and Horocyclic norm}
Let $\lambda$ be a mesured foliation on $\Sigma_{g, m}$, and 
define $\ell_{\lambda}(x)={\rm Ext}_x(\lambda)^{1/2}$.

For $\zeta\in \mathbb{H}$, we define the \emph{horocyclic deformation} at time $t$ along $\lambda$ by
$$
\teich_{g,m}\ni x\mapsto E_{t\lambda}(x)=\Phi_{q_{\lambda,x}}(-t\ell_\lambda(x)+i).
$$
As is shown in  \cite{MM}, if  $\lambda$ lies in $\mathcal{S}$ and $t=\ell_\lambda(x)$, then $E_{\ell_{\lambda}(x)\lambda}(x)$ is the image of $x$ under the action of the (right-handed) Dehn twist around $\lambda$.
The \emph{horocyclic vector field} determined by $\lambda$ is a vector field on $\teich_{g,m}$ defined by
$$
\left(\frac{\partial}{\partial t_\lambda}\right)_x
=\left(
\frac{d  E_{t\lambda}(x)}{dt}
\right)_{t=0}
$$
for every point $x \in \teich_{g,m}$.
\begin{proposition}
\label{prop:horocyclic-vector-field}
For $x\in \teich_{g,m}$, we have
$$
\left(\frac{\partial}{\partial t_\lambda}\right)_x=\left[
\frac{i\ell_\lambda(x)}{2}\frac{\overline{q_{\lambda,x}}}{|q_{\lambda,x}|}
\right]\in T_x\teich_{g,m}.
$$
In particular, the horocyclic vector field is a continuous vector field on $\teich_{g,m}$.
\end{proposition}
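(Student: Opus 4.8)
The plan is to compute the derivative directly from the definition of $E_{t\lambda}$, reducing everything to the differential of the Teichm\"uller disc map $\Phi_{q}$ at its centre. Write $q=q_{\lambda,x}$ and $\ell=\ell_\lambda(x)$ for brevity. First I would observe that $E_{t\lambda}(x)=\Phi_q(\zeta(t))$, where $\zeta(t)=-t\ell+i$ is a horizontal (hence horocyclic) path in $\mathbb{H}$ through $\zeta(0)=i$. Since the Beltrami differential defining $h_i$ is $\frac{i-i}{i+i}\frac{\overline q}{|q|}=0$, the map $h_i$ is conformal and may be taken to be the identity, so $\Phi_q(i)=(M,f)=x$; thus the path starts at $x$ and the chain rule gives $\left(\frac{\partial}{\partial t_\lambda}\right)_x=\frac{d}{dt}\big|_{t=0}\Phi_q(\zeta(t))$ with velocity $\dot\zeta(0)=-\ell$.

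Second, I would factor $\Phi_q$ as the composition of the Beltrami map $\zeta\mapsto \mu_\zeta:=\frac{\zeta-i}{\zeta+i}\frac{\overline q}{|q|}\in L^\infty(M)$ with the projection $P$ sending a Beltrami differential to the Teichm\"uller class of the surface obtained by solving the Beltrami equation. The essential input from the infinitesimal theory recalled in Section~2 is that the derivative of $P$ at the zero Beltrami differential is exactly the quotient projection $\mu\mapsto[\mu]\in T_x\teich_{g,m}$; equivalently, for any smooth family $\mu_t$ with $\mu_0=0$, the velocity at $t=0$ of the resulting path in $\teich_{g,m}$ is $[\dot\mu_0]$. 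It then remains to differentiate $\mu_{\zeta(t)}=\frac{-t\ell}{2i-t\ell}\frac{\overline q}{|q|}$ at $t=0$. A short computation with the Cayley transform (using $\frac{d}{d\zeta}\frac{\zeta-i}{\zeta+i}\big|_{\zeta=i}=-\frac{i}{2}$ together with $\dot\zeta(0)=-\ell$) yields $\dot\mu_0=\frac{i\ell}{2}\frac{\overline q}{|q|}$, whence $\left(\frac{\partial}{\partial t_\lambda}\right)_x=\big[\frac{i\ell_\lambda(x)}{2}\frac{\overline{q_{\lambda,x}}}{|q_{\lambda,x}|}\big]$, as claimed.

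The step requiring the most care is the identification in the second paragraph: one must be sure that differentiating the abstract path $t\mapsto\Phi_q(\zeta(t))$ in $\teich_{g,m}$, which is a derivative of Teichm\"uller equivalence classes, genuinely produces the class $[\dot\mu_0]$ of the pointwise derivative of the representing Beltrami differentials. This is precisely where the standard infinitesimal Teichm\"uller theory does the work, namely the identification of $T_x\teich_{g,m}$ with $L^\infty(M)$ modulo the annihilator of $\mathcal{Q}_x$ together with the fact that the Bers projection $P$ is holomorphic with derivative the quotient map at the origin; it applies here because $\mu_{\zeta(t)}$ stays in the open unit ball of $L^\infty(M)$ for small $t$ and depends real-analytically on $t$. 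Finally, the continuity assertion follows at once from the resulting formula: the Hubbard--Masur map $(\lambda,x)\mapsto q_{\lambda,x}$ is a homeomorphism and the extremal length, hence $\ell_\lambda=\mathrm{Ext}_x(\lambda)^{1/2}$, is continuous, so $\frac{i\ell_\lambda(x)}{2}\frac{\overline{q_{\lambda,x}}}{|q_{\lambda,x}|}$ varies continuously with $x$.
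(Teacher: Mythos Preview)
Your proof is correct and follows essentially the same approach as the paper: both compute the first-order expansion of the Beltrami coefficient $\frac{\zeta(t)-i}{\zeta(t)+i}\frac{\overline{q}}{|q|}$ along the path $\zeta(t)=-t\ell_\lambda(x)+i$ to obtain the coefficient $\frac{i\ell_\lambda(x)}{2}$, and then invoke the identification of tangent vectors with infinitesimal Beltrami differentials. You supply considerably more detail than the paper does---in particular you make explicit the role of the derivative of the Bers projection at the origin and you justify the continuity claim via the Hubbard--Masur homeomorphism and the continuity of extremal length---but the underlying argument is the same.
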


\begin{proof}
Indeed, since
$$
\frac{(-t\ell_\lambda(x)+i))-i}{(-t\ell_\lambda(x)+i)+i}=\frac{i\ell_\lambda(x)}{2}t+o(t)
$$
as $t\to 0$, the equality follows from the definition of a Teichm\"uller disc.
\end{proof}

From Teichm\"uller's theorem and \Cref{prop:horocyclic-vector-field}, each non-zero tangent vector to $\teich_{g,m}$ at a point $x$ is represented by a horocyclic vector up to a scalar.
We define the \emph{horocyclic norm} on $T_x\teich_{g,m}$ by
$$
\left\|\left(\frac{\partial}{\partial t_\lambda}\right)_x\right\|_h=\ell_\lambda (x).
$$
From \eqref{eq:teichmuller-beltrami}, we have the following theorem, which is a complete analogue of \Cref{thm:earthquake_norm}.
\begin{theorem}
\label{thm:Horocyclic_norm}
We have the following identity
$$
\left\|\,\cdot\, \right\|_h
%=\frac{1}{2}
=2\,\kappa\left(\,\cdot\,
%\left.\frac{\partial}{\partial t_\lambda}\right|_x
\right)
$$
on $T\teich_{g,m}$.
\end{theorem}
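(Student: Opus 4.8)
The plan is to verify the identity first on the horocyclic vectors, where it is a one-line consequence of \Cref{prop:horocyclic-vector-field} and \eqref{eq:teichmuller-beltrami}, and then to spread it over all of $T\teich_{g,m}$ using the positive homogeneity of the two norms together with Teichm\"uller's theorem. Since $\|\cdot\|_h$ and $\kappa$ are both positively homogeneous, it suffices to evaluate them on one representative in each ray, and \Cref{prop:horocyclic-vector-field} furnishes such a representative in every direction.

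For the key computation I would start from the expression
\[
\left(\frac{\partial}{\partial t_\lambda}\right)_x=\left[\frac{i\ell_\lambda(x)}{2}\,\frac{\overline{q_{\lambda,x}}}{|q_{\lambda,x}|}\right]
\]
provided by \Cref{prop:horocyclic-vector-field}. The scalar in front is complex, so \eqref{eq:teichmuller-beltrami} does not apply verbatim; the trick is to absorb its unimodular part into the quadratic differential. Since $\mathcal{Q}_x$ is a complex vector space, $q'=-i\,q_{\lambda,x}\in\mathcal{Q}_x$ satisfies $|q'|=|q_{\lambda,x}|$ and $\overline{q'}/|q'|=i\,\overline{q_{\lambda,x}}/|q_{\lambda,x}|$, so the horocyclic vector equals $\tfrac{\ell_\lambda(x)}{2}\,[\overline{q'}/|q'|]$. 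Applying \eqref{eq:teichmuller-beltrami} to $q'$ and using positive homogeneity of $\kappa$ gives $\kappa\bigl((\partial/\partial t_\lambda)_x\bigr)=\tfrac12\ell_\lambda(x)$, and comparing this with the defining equality $\|(\partial/\partial t_\lambda)_x\|_h=\ell_\lambda(x)$ fixes the constant relating the two norms.

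To finish, I would extend the identity from horocyclic vectors to arbitrary tangent vectors. By Teichm\"uller's theorem every nonzero $v\in T_x\teich_{g,m}$ has the form $[k\,\overline{q}/|q|]$ with $k\ge 0$ and $q\in\mathcal{Q}_x$; choosing $\lambda$ whose Hubbard--Masur differential $q_{\lambda,x}$ is the appropriate unimodular rotation of $q$ (namely $q_{\lambda,x}=iq$ up to a positive scalar) exhibits $v$ as a nonnegative multiple of the horocyclic vector $(\partial/\partial t_\lambda)_x$. Positive homogeneity of $\|\cdot\|_h$ and of $\kappa$ then transports the identity from that horocyclic vector to $v$, and since $x$ is arbitrary the identity holds on all of $T\teich_{g,m}$.

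The point that needs the most care is not this single-vector computation but the consistency underlying the last step: one must know that the assignment $\lambda\mapsto(\partial/\partial t_\lambda)_x$ is compatible with the uniqueness in Teichm\"uller's theorem and with the scaling $\lambda\mapsto s\lambda$, so that $\|\cdot\|_h$ is a genuinely well-defined norm and the proportionality constant is the same in every direction and at every point. This is precisely where the observation recorded after \Cref{prop:horocyclic-vector-field}---that each nonzero tangent vector is a horocyclic vector up to a scalar---and the Hubbard--Masur parametrization of $\mathcal{Q}_x$ by $\mathcal{MF}$ do the real work; once these are in hand, the theorem reduces to the elementary norm computation above.
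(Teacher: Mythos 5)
Your proposal is correct and follows exactly the route the paper intends: the paper's entire proof is the remark that the theorem follows from \eqref{eq:teichmuller-beltrami}, and your computation (absorbing the unimodular factor $i$ into the quadratic differential, evaluating both norms on the horocyclic vector from \Cref{prop:horocyclic-vector-field}, and invoking Teichm\"uller's theorem plus Hubbard--Masur to see that every ray in $T_x\teich_{g,m}$ is hit) is precisely the omitted detail. No gaps.
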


\subsection{An analogue of Wolpert's duality for general surfaces}
In this subsection, we prove a duality result which is analogous to the duality between the infinitesimal earthquakes and the differentials of hyperbolic length functions  studied in \cite{HOPP}.
The Legendre transform in  Finsler geometry was introduced and developed by Ohta--Sturm in \cite{Ohta}.
Here, we consider a complex version of this Lengendre transform.
Let $M$ be a complex manifold and $F$ a complex Finsler metric. The latter is an assignment of a complex norm $F_x$ on the holomorphic tangent space $T_xM$ for each $x\in M$.
The associated \emph{conorm} $F^*_x$ on the holomorphic cotangent space $T^*_x\!M$ is defined by
\begin{equation}
\label{eq:conorm}
F^*_x\!(\alpha)=\sup\{{\rm Re}(\alpha(\xi))\mid F_x(\xi)\le 1\}.
\end{equation}
We assume that $F$ is strictly convex in the sense that for $\xi_1,\xi_2\in T_xM$ with $F_x(\xi_1)=F_x(\xi_2)=1$, $F_x((\xi_1+\xi_2)/2)<1$.
Then, for each $x\in M$, the (complex) \emph{Legendre transform} $J_x\colon T^*_x\!M\to T_xM$ with respect to $F$ is defined to be a map assigning to $\alpha$  the (unique) maximiser of the functional
\begin{equation}
\label{eq:LT}
T_xM\ni \xi\mapsto {\rm Re}(\alpha(\xi))-\frac{1}{2}F_x(\xi)^2-\frac{1}{2}F^*_x\!(\alpha)^2.
\end{equation}
The strictly convexity guarantees the uniqueness of the maximiser. %Ohta and Sturm discuss the Legendre transforms for real Finsler manifolds (cf. \cite[\S1.2]{Ohta}). The Legendre transform \eqref{eq:LT} discussed here is naturallly thought of the complexification of that given in \cite{Ohta}.

We now return to the Teichm\"uller space. 
For $x\in \teich_{g,m}$, the \emph{infinitesimal Teichm\"uller homeomorphism} 
is defined by
\begin{equation}
\label{eq:TB}
\mathcal{Q}_x\ni q\mapsto \left[\|q\|\dfrac{\overline{q}}{|q|}\right]\in T_x\teich_{g,m}
\end{equation}
when $q\ne 0$, and $J_x(0)=0$ otherwise. From the viewpoint of  Finsler geometry, we see that the infinitesimal Teichm\"uller homeomorphism is nothing but the Legendre transform as follows.

\begin{proposition}[Legendre transform with respect to the Teichm\"uller metric]
\label{prop:Legendre_transform}
For $x\in \teich_{g,m}$,  the infinitesimal Teichm\"{u}ller homeomorphism  coincides with the Legendre transform with respect to the Teichm\"uller metric.
\end{proposition}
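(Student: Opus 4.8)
The plan is to evaluate the Legendre transform of \eqref{eq:LT} explicitly, taking $F_x=\kappa$ to be the Teichm\"uller norm and identifying the holomorphic cotangent space $T^*_x\teich_{g,m}$ with $\mathcal{Q}_x$, so that a cotangent vector $\alpha$ is a quadratic differential $p\in\mathcal{Q}_x$ acting by $\alpha(\xi)=\langle\xi,p\rangle$. The whole computation rests on Teichm\"uller's theorem, which says that every $v\in T_x\teich_{g,m}$ has a unique representation $v=[k\,\overline{q}/|q|]$ with $q\in\mathcal{Q}_x$, $\|q\|=1$ and $k\ge 0$, together with \eqref{eq:teichmuller-beltrami}, which identifies the $\kappa$-unit sphere with the set of directions $[\overline{q}/|q|]$.

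The first step is to compute the conorm $\kappa^*_x$. For $p\in\mathcal{Q}_x$ and a unit vector $[\overline{q}/|q|]$ one has
$$
{\rm Re}\,\langle [\overline{q}/|q|],p\rangle
={\rm Re}\iint_M\frac{\overline{q}}{|q|}\,p\,dxdy
\le \iint_M|p|\,dxdy=\|p\|,
$$
and equality holds if and only if $\overline{q}/|q|=\overline{p}/|p|$ almost everywhere, i.e. $q=p/\|p\|$. Hence $\kappa^*_x(p)=\|p\|$, attained at the unique direction $[\overline{p}/|p|]$.

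The second step is to maximise the functional in \eqref{eq:LT}. Because the summand $-\tfrac12\kappa^*_x(p)^2$ does not depend on $\xi$, it is enough to maximise ${\rm Re}\,\langle\xi,p\rangle-\tfrac12\kappa(\xi)^2$. Writing $r=\kappa(\xi)$, the definition of the conorm gives ${\rm Re}\,\langle\xi,p\rangle\le r\,\kappa^*_x(p)=r\|p\|$, so that
$$
{\rm Re}\,\langle\xi,p\rangle-\tfrac12\kappa(\xi)^2
\le r\|p\|-\tfrac12 r^2
\le \tfrac12\|p\|^2 ,
$$
the last step being the inequality $\tfrac12(r-\|p\|)^2\ge 0$. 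Both inequalities are equalities exactly when $r=\|p\|$ and $\xi/\|p\|$ realises the conorm; by the first step this forces $\xi=[\,\|p\|\,\overline{p}/|p|\,]$. This is precisely the image of $p$ under the infinitesimal Teichm\"uller homeomorphism \eqref{eq:TB}, which proves the proposition.

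The point requiring care is uniqueness of the maximiser. The abstract definition of the Legendre transform presupposes strict convexity of $F$ in order to single out a unique maximiser, a hypothesis that is delicate for the Teichm\"uller norm; I would therefore not invoke it, but instead read off uniqueness directly from the equality discussion above, which pins down $\xi$ for each fixed $p$ via the uniqueness clause of Teichm\"uller's theorem. Finally I would dispose of the degenerate case $p=0$, where both the Legendre transform and \eqref{eq:TB} vanish by definition.
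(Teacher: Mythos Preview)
Your argument is correct and follows essentially the same route as the paper: compute the conorm as the $L^1$-norm on $\mathcal{Q}_x$, bound the functional by the arithmetic--geometric inequality $\tfrac12\kappa(v)^2+\tfrac12\|q\|^2\ge\kappa(v)\|q\|$ together with ${\rm Re}\langle v,q\rangle\le\kappa(v)\|q\|$, and identify the unique maximiser via Teichm\"uller's uniqueness theorem. One small correction: strict convexity of the Teichm\"uller norm is not delicate---it was established by Royden in \cite{Roy}, and the paper cites this---though, as you observe, the equality analysis via Teichm\"uller's theorem already yields uniqueness without appealing to it.
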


\begin{proof}
Royden noted in \cite{Roy} that the Teichm\"uller norm is strictly convex.
By \eqref{eq:teichmuller-beltrami}, the conorm of the Teichm\"uller norm is the $L^1$-norm on $\mathcal{Q}_x$.
Therefore, for $q\in \mathcal{Q}_x$, its Legendre transform is by definition the maximiser of the functional
\begin{equation}
\label{eq:functional}
T_x\teich_{g,m}\ni v\mapsto {\rm Re}\langle v,q\rangle-\frac{1}{2}\kappa(v)^2-\frac{1}{2}\|q\|^2.
\end{equation}
Since
$$
{\rm Re}\langle v,q\rangle-\frac{1}{2}\kappa(v)^2-\frac{1}{2}\|q\|^2\le {\rm Re}\langle v,q\rangle-\kappa(v)\|q\|\le 0,
$$
the functional in \eqref{eq:functional}  attains the maximum $0$ when $v=\|q\|\overline{q}/|q|$.
By Teichm\"uller's uniqueness theorem, we see that ${\rm Re}\langle v,q\rangle-\kappa(v)\|q\|=0$ if and only if $v=\|q\|\overline{q}/|q|$.
Thus, we have shown that the  infinitesimal Teichm\"{u}ller homeomorphism \eqref{eq:TB} coincides with the Legendre transform.
\end{proof}

From \eqref{eq:Gardiner_formula},
$$
(\partial \ell_{\lambda})_x=\frac{1}{2\ell_\lambda}(\partial{\rm Ext}_\lambda)_x=-\frac{1}{2\ell_\lambda}q_{\lambda,x}
\in \mathcal{Q}_x\cong T^*_x\!\teich_{g,m}
$$
for $x\in \teich_{g,m}$ and $\lambda\in \mathcal{MF}$.
Hence, we have the following.
\begin{theorem}[Duality]
\label{thm:duality}
Let $J_x\colon T_x^*\!\teich_{g,m}\to T_x\teich_{g,m}$ be the Legendre transform with respect to the Teichm\"uller metric.
For $\lambda \in\mathcal{MF}$ and $x\in \teich_{g,m}$,
$$
J_x\left(
(\partial \ell_\lambda)_x
\right)=i\left(\frac{\partial}{\partial t_\lambda}\right)_x.$$
In particular, we have $\kappa^*(\alpha)=\Vert J_x(\alpha)\Vert_h$ for every $\alpha \in T^*_{g,m}$, where $\kappa^*$ is the dual norm of $\kappa$.
\end{theorem}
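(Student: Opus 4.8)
The plan is to reduce the statement to the explicit descriptions already available and then match two Beltrami representatives. By \Cref{prop:Legendre_transform}, the Legendre transform $J_x$ is the infinitesimal Teichm\"uller homeomorphism $\mathcal{Q}_x\ni q\mapsto [\,\|q\|\,\overline{q}/|q|\,]$, so the whole theorem becomes a computation once we feed it the cotangent vector $(\partial\ell_\lambda)_x$. For that cotangent vector I would use the formula displayed just before the theorem, $(\partial\ell_\lambda)_x=-\frac{1}{2\ell_\lambda}q_{\lambda,x}$, viewed as an element of $\mathcal{Q}_x\cong T^*_x\teich_{g,m}$, together with $\|q_{\lambda,x}\|={\rm Ext}_x(\lambda)=\ell_\lambda(x)^2$ coming from the definition of $\ell_\lambda$. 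On the tangent side I would invoke \Cref{prop:horocyclic-vector-field}, which gives $(\partial/\partial t_\lambda)_x=[\,\tfrac{i\ell_\lambda}{2}\,\overline{q_{\lambda,x}}/|q_{\lambda,x}|\,]$.

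The first identity is then a direct substitution. Writing $q=q_{\lambda,x}$ and $\ell=\ell_\lambda(x)$ and applying $J_x$ to $\alpha=-\frac{1}{2\ell}q$, I get $\|\alpha\|=\frac{1}{2\ell}\|q\|=\frac{\ell}{2}$, while the real negative scalar $-\frac{1}{2\ell}$ passes through conjugation and normalisation to give $\overline{\alpha}/|\alpha|=-\overline{q}/|q|$, so that $J_x((\partial\ell_\lambda)_x)=[\,-\tfrac{\ell}{2}\,\overline{q}/|q|\,]$. On the other hand, since the complex structure on $T_x\teich_{g,m}$ is induced by multiplication of the Beltrami coefficient by $i$, one has $i(\partial/\partial t_\lambda)_x=[\,i\cdot\tfrac{i\ell}{2}\,\overline{q}/|q|\,]=[\,-\tfrac{\ell}{2}\,\overline{q}/|q|\,]$, which is exactly the same class. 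This establishes $J_x((\partial\ell_\lambda)_x)=i(\partial/\partial t_\lambda)_x$. The only points needing care are the bookkeeping of the complex and real scalars as they pass through the quotient $[\,\cdot\,]$ and through $q\mapsto\overline{q}$, and the convention fixing the complex structure on the tangent space.

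For the ``in particular'' clause I would argue at the level of norms. First, the Legendre transform is norm-preserving: for $\alpha\in\mathcal{Q}_x$ one has $\kappa(J_x(\alpha))=\|\alpha\|\,\kappa([\,\overline{\alpha}/|\alpha|\,])=\|\alpha\|$ by \eqref{eq:teichmuller-beltrami}, while the conorm of the Teichm\"uller norm is the $L^1$-norm (as recorded in the proof of \Cref{prop:Legendre_transform}), so $\kappa^*(\alpha)=\|\alpha\|$; hence $\kappa^*(\alpha)=\kappa(J_x(\alpha))$ for every $\alpha\in T^*_x\teich_{g,m}$. (Equivalently, this is the equality case of the Young-type inequality ${\rm Re}\,\alpha(\xi)\le\kappa^*(\alpha)\kappa(\xi)$ built into the variational definition of $J_x$.) Combining this with \Cref{thm:Horocyclic_norm}, which relates the horocyclic norm to the Teichm\"uller norm, converts $\kappa(J_x(\alpha))$ into $\|J_x(\alpha)\|_h$ and yields the asserted identity $\kappa^*(\alpha)=\|J_x(\alpha)\|_h$; alternatively one can check it directly by realising $J_x(\alpha)$ as a horocyclic vector and reading off its horocyclic norm from ${\rm Ext}_x$.

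I expect the main obstacle to be conceptual rather than computational: the whole content is carried by \Cref{prop:Legendre_transform} (that the abstractly defined Legendre maximiser is given by the concrete infinitesimal Teichm\"uller homeomorphism) and by the norm-preservation property of that transform; once these are in hand, both the vector identity and its norm shadow are short calculations. The subtlety to watch is the consistent normalisation of constants, in particular the factors $\tfrac12$ and $i$ appearing across \Cref{prop:horocyclic-vector-field}, \Cref{thm:Horocyclic_norm}, and the definition of $\ell_\lambda$, since the final constant in the duality depends on keeping all of these compatible.
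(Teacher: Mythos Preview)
Your argument for the main identity is correct and matches the paper's proof exactly: both feed $(\partial\ell_\lambda)_x=-\tfrac{1}{2\ell_\lambda}q_{\lambda,x}$ into the infinitesimal Teichm\"uller homeomorphism (via \Cref{prop:Legendre_transform}), use $\|q_{\lambda,x}\|=\ell_\lambda(x)^2$, and compare the resulting Beltrami class $[-\tfrac{\ell_\lambda}{2}\,\overline{q_{\lambda,x}}/|q_{\lambda,x}|]$ with $i$ times the horocyclic vector from \Cref{prop:horocyclic-vector-field}. Your additional treatment of the ``in particular'' clause---norm preservation of $J_x$ together with \Cref{thm:Horocyclic_norm}---goes beyond what the paper writes (the paper simply ends with ``and we are done''), and your caveat about tracking the factors of $\tfrac12$ is well placed.
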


\begin{proof}
Since $\|q_{\lambda,x}\|={\rm Ext}_x(\lambda)=\ell_\lambda(x)^2$,
from \Cref{prop:horocyclic-vector-field}, we get
\begin{align*}
J_x\left(
(\partial \ell_\lambda)_x
\right)
&=\left[\frac{1}{2\ell_\lambda(x)}\|q_{\lambda,x}\|\dfrac{\overline{(-1/2\ell_\lambda(x))q_{\lambda,x}}}{|(-1/2\ell_\lambda(x))q_{\lambda,x}|}\right] \\
&=-\left[\frac{\ell_\lambda(x)}{2}\frac{\overline{q_{\lambda,x}}}{|q_{\lambda,x}|}\right]
=i\left(\frac{\partial}{\partial t_\lambda}\right)_x,
\end{align*}
and we are done.
\end{proof}

\begin{remark}
As is shown in  \cite[\S1.4]{Ohta}, in the case when the manifold is Riemannian, the Legendre transform takes the differential of a smooth function to its gradient. From this point of view, our duality formula in \cref{thm:duality} is a complete analogue of the Wolpert duality \eqref{eq:gradient_torus_dual} in the torus case.

%In the case of the Hermitian metric, 
%the Legendre transform assigns the gradient vectors to smooth (real-valued) functions.
%Indeed, let $M$ be a complex manifold, $h=\{h_x\}_{x\in M}$ an Hermitian metric on $M$, and $g_x=2{\rm Re}(h_x)$ the associated Riemannian metric. Let $F_x(\xi)=\sqrt{g_x(\xi,\xi)}$.
%
%Let $H$ be a smooth function defined around $x\in M$. The gradient vector $(\nabla H)_x\in T_xM$ of $H$ at $x\in M$ satisfies
%$$
%g_x(Y,(\nabla H)_x)=\partial H(Y)
%$$
%for $Y\in T_xM$.
%
%of The conorm \eqref{eq:conorm} of $(\partial H)_x$ for $F_x$ is attained at the gradient vector $(\nabla H)_x$ of $H$
%and satisfies $F_x((\nabla H)_x)=F^*((\partial H)_x)$.
%Hence, the functional \eqref{eq:LT} for $F_x$ is also attained at the $(\nabla H)_x\in T_xM$ for the covector $(\partial H)_x$. Thus, the Legendre transform for $F_x=(g_x)^{1/2}$ is given by
%$$
%T^*_x\! M\ni (\partial H)_x\mapsto (\nabla H)_x\in T_xM.
%$$
%
%In general, the fundamental form $\omega$ of the Hermite metric $h_x$ defined by
%$\omega(X,Y)=-2{\rm Im}(h_x(X,Y))=i(h_x(X,Y)-\overline{h_x(X,Y)})$ for $X$, $Y\in T_xM$.
%

\end{remark}

\bibliography{horocyclic}
\bibliographystyle{acm}

%\begin{thebibliography}{99}
%
%\bibitem{Gardiner}
%F.~P.~Gardiner,
%\newblock Measured foliations and the minimal norm property for quadratic differentials,
%\newblock Acta Math. {\bf 152}, 57--76 (1984).
%
%\bibitem{IT}
%Y.~Imayoshi  and M.~Taniguchi,
%\newblock An introduction to {T}eichm{\"u}ller spaces
%\newblock Springer-Verlag, Tokyo (1992).
%
%\bibitem{MM}
%A.~Marden and H.~Masur,
%\newblock A foliation of {T}eichm{\"u}ller space by twist invariant disks,
%\newblock Math. Scand. {\bf 2}, 211--228 (1975).
%
%\bibitem{Royden}
%H.~L.~Royden,
%\newblock
%Automorphisms and isometries of {T}eichm{\"u}ller space,
%\newblock
%Advances in the {T}heory of {R}iemann {S}urfaces ({P}roc. {C}onf., {S}tony {B}rook, {N}.{Y}., 1969),
%\newblock
%Ann. of Math. Studies, No. {\bf 66}. Princeton Univ. Press, Princeton, N.J. 369--383 (1971).
%\end{thebibliography}

\end{document}